\keywords{} 
\subjclass[2010]{}
\theoremstyle{plain}
\newtheorem{thm}{Theorem}[section]
\newtheorem{prop}[thm]{Proposition}
\newtheorem{cor}[thm]{Corollary}
\newtheorem{lem}[thm]{Lemma}
\theoremstyle{definition}
\newcommand{\sF}{\mathcal{F}}
\newcommand{\sH}{\mathcal{H}}
\newcommand{\sI}{\mathcal{I}}
\newcommand{\sJ}{\mathcal{J}}
\newcommand{\sK}{\mathcal{K}}
\newcommand{\sL}{\mathcal{L}}
\newcommand{\sN}{\mathcal{N}}
\newcommand{\sM}{\mathcal{M}}
\newcommand{\sO}{\mathcal{O}}
\newcommand{\sS}{\mathcal{S}}
\newcommand{\sX}{\mathcal{X}}
\newcommand{\sY}{\mathcal{Y}}
\newcommand{\mP}{\mathbb{P}}
\newcommand{\mZ}{\mathbb{Z}}
\newcommand{\Pic}{\mathrm{Pic}\,}
\newcommand{\Spec}{\mathrm{Spec}\,}
\newcommand{\Sym}{\mathrm{Sym}}
\def\geq{\geqslant}
\def\leq{\leqslant}
\numberwithin{equation}{section}
\newcommand{\beba}  {\begin{equation}\begin{array}{rcl}}
\newcommand{\eaee}  {\end{array}\end{equation}}
\def\l@section{\@tocline{1}{0pt}{1pc}{}{}}
\def\l@subsection{\@tocline{2}{0pt}{1pc}{4.6em}{}}
\def\l@subsection{\@tocline{3}{0pt}{1pc}{7.6em}{}}
\renewcommand{\tocsection}[3]{%
  \indentlabel{\@ifnotempty{#2}{\makebox[2.3em][l]{%
    \ignorespaces#1 #2.\hfill}}}#3}
\renewcommand{\tocsubsection}[3]{%
  \indentlabel{\@ifnotempty{#2}{\hspace*{2.3em}\makebox[2.3em][l]{%
    \ignorespaces#1 #2.\hfill}}}#3}
\renewcommand{\tocsubsection}[3]{%
  \indentlabel{\@ifnotempty{#2}{\hspace*{4.6em}\makebox[3em][l]{%
    \ignorespaces#1 #2.\hfill}}}#3}
\title{On deformations of the surfaces of bitangents to smooth quartic surfaces in $\mP^3$}
\author{Ciro Ciliberto}
\address{
Ciro Ciliberto\\Department of Mathematics, \\
Universit\`a di Roma ``Tor Vergata''\\
Via della Ricerca  Scientifica, 00177 Roma\\ Italia
\texttt{cilibert@axp.mat.uniroma2.it   }}
\author{Sandro Verra}
\address{Sandro Verra\\Department of Mathematics \\
Universit\`a di Roma 3\\
Largo San Leonardo Murialdo,  00146 Roma \\ Italia
\texttt{sandro.verra@gmail.com}}
\author{Francesco Zucconi}
\address{Francesco Zucconi\\Department of Mathematics, Computer Science and Physics \\
Universit\`a di Udine\\
33100 Udine\\ Italia
\texttt{francesco.zucconi@dimi.uniud.it}}
\begin{document}

\markboth{}{}
	\label{cast2}
\maketitle
%\tableofcontents

Abstract. {We prove that the surface $S(X)$ of bitangent lines of a general smooth quartic surface $X$ in $\mP^3$  has unobstructed deformations of dimension $20=h^1(S(X), T_{S(X)})$.  In addition, we show that the space of infinitesimal embedded deformations of $X$ injects into the one of $S(X)$. Finally we prove that there is a natural birational map from the 20--dimensional moduli space of (polarised) double coverings of EPW--sextics to the moduli space of  regular surfaces $S$ with $p_g=45$ and $K_S^2=360$ polarised with a very ample line bundle $H$ such that $H^2=40$, $h^0(S, H)=6$: the map sends a double covering of a EPW--sextic in $\mP^5$  to the surface of double points of the EPW--sextic. }

\section{Introduction}

Let $X$ be a smooth quartic surface in $\mP^3$ that does not contain any line. The surface $S(X)$ of its bitangent lines, that naturally sits in the Grassmannian $\mathbb G:=\mathbb G(1,3)$ of lines in $\mP^3$ Pl\"ucker embedded in $\mP^5$, is smooth and irreducible. This surface has been studied by several authors (see, for instance, in time order \cite {W, T, Cl}, and more recently \cite {CZ2, CZ3}). It is known, for example, that $S(X)$ is regular, minimal, with $p_g=45$ and $K_{S(X)}^2=360$. 

A first objective of the present paper is to prove, in \S \ref {sec:natis}, that, for $X$ general, a non--trivial first order embedded deformation of $X$ induces an infinitesimal deformation of $S(X)$ that gives a non--trivial infinitesimal variation of Hodge structures (IVHS), see Theorem \ref {finale}. As a consequence one has that, if $X$ is a general quartic surface in $\mP^3$, the infinitesimal deformations of $S(X)$  induced by the embedded infinitesimal deformations of $X$ inject into $H^1(S(X), T_{S(X)})$ (that has dimension 20, see  \cite [\S 7]{CZ3}), fill up a hyperplane of $H^1(S(X), T_{S(X)})$ and  satisfy the infinitesimal Torelli claim (see Corollary \ref {perkuranishi} and Theorem \ref {nientemale}), whereas the general infinitesimal deformation in $H^1(S(X), T_{S(X)})$ does not satisfy the infinitesimal Torelli claim (see \cite [\S 4]{CZ3}). 

This infinitesimal information has a geometric counterpart as we show in \S \ref {sec:tor}. Consider  the $20$ dimensional  component $\sH$ of the moduli space of hyperkh\"aler 4--folds parametrizing the (polarised) double covers of  EPW--sextics. It is known that $\sH$ contains  points corresponding to the Hilbert squares $X^{[2]}$ of all smooth quartic surfaces $X\subset\mP^3$.  So, if $\sK$ is the moduli space  of (polarised) smooth quartic surfaces in $\mP^3$, that has dimension 19, we can naturally consider $\sK$ as a subvariety of $\sH$.  Then there is a natural map $\varphi: \sH\dasharrow \sM$, where $\sM$ is the moduli space of  regular surfaces $S$ with $p_g=45$ and $K_S^2=360$, polarised with a very ample line bundle $H$ such that $H^2=40$, $h^0(S, H)=6$. The map $\varphi$ sends a double cover of a general EPW--sextic in $\mP^5$  to the surface of double points of the EPW--sextic. In particular, if $X$ is a smooth quartic surface in $\mP^3$ with no line,  $\varphi$ sends $X^{[2]}$ to $S(X)\subset \mathbb G$. So $\sM$ contains a subvariety $\sS$ whose points correspond to the surfaces of bitangents to smooth quartics in $\mP^3$ with no lines and the map $\phi$ induces a map $\varphi': \mathcal K\dasharrow \mathcal S$. We prove in Proposition \ref 
{prop:iso} that  $\varphi'$ induces a bijection between the open set of $\mathcal K$ parametrizing smooth quartics with no lines and $\mathcal S$. This gives another proof of 
the fact that for any smooth quartic surface $X$ with no line the infinitesimal deformations of $S(X)$  induced by the embedded infinitesimal deformations of $X$ inject into $H^1(S(X), T_{S(X)})$. Moreover we prove  that the map $\varphi: \mathcal H\dasharrow \mathcal M$ is birational (see Theorem \ref {thm:bir}). This implies that $\mathcal M$ has dimension $20$ and it is reduced. Section \S \ref {sec:prel} is devoted to collect some preliminary material used in the rest of the paper.

We work over the complex numbers.\medskip

{\bf Acknowledgements:} Ciro Ciliberto is a member of GNSAGA of the Istituto Nazionale di Alta Matematica ``F. Severi".

\section{Preliminaries and notation}\label{sec:prel}
In this section we will introduce concepts and notation that we are going to use, without any further mention, in the rest of the paper.

\subsection{Double covers}
Let $\pi\colon Y\longrightarrow X$ be a double cover of a smooth algebraic variety $X$ branched along a smooth divisor $Z$, so that also $Y$ is smooth. Then. $\pi_*\sO_Y=\sO_X\oplus \sL^\vee$ where $\sL$ is a line bundle such that $\sL^{\otimes 2}=\sO_X(Z)$. Since $\pi$ is finite, 
 we have $ H^i(Y, T_Y)=H^i(X,\pi_* (T_Y))$  for all non--negative integers $i$. If $\pi\colon Y\longrightarrow X$ is \'etale, one has $T_Y\cong \pi^*(T_X)$, hence by the projection formula we have $\pi_*(T_Y)=T_X\oplus (T_X\otimes \sL^\vee)$, hence
\begin{equation}\label{prima decomposizione}
 H^1(Y, T_Y)= H^1(X, T_X) \oplus H^1(X, T_X\otimes \sL).
\end{equation}

\subsection{ Embedded infinitesimal deformations of hypersurfaces}\label{ssec:emb}
Let $X\subset\mP^n$ be a smooth hypersurface of degree $d$ with defining equation $F=0$. Let $B_{s}\subset H^0(\mP^n, \sO_{\mP^n}(d))$ be the Zariski open set of smooth hypersurfaces. Let $\pi\colon\sI\to B_{s}$ be the universal hypersurface. Denote by $\oplus_{m=0}^{\infty}\mathbb C[x_0,\cdots , x_n]_m$ the graded ring $\mathbb C[x_0,\cdots , x_n]$ and by $J_F=\oplus_{m=0}^{\infty}J^m$ the Jacobian ideal, that is the ideal generated by the partial derivatives $\partial F/\partial x_i$, $i=0,\ldots,n$. We denote by $R_F=\oplus_{m=0}^{\infty}R^m_F$ the \emph{Jacobian ring} of $X$, i.e., the graded ring $\mathbb C[x_0,\cdots , x_n]/J$. Clearly $\mathbb C[x_0,\cdots , x_n]_d\cong T_{B_s,[X]}$. If $n\geq 2$, we have the \emph{Kodaira--Spencer map}
$$
ks\colon T_{B_s,[X]}\longrightarrow H^1(X, T_X).
$$
It is known that the kernel of this map is $J^d_F$, namely $ks$ factors through an injective map
$$
\kappa\sigma: R^d_F\longrightarrow H^1(X, T_X).
$$
Moreover $ks$ is surjective if either $n\geq 4$ or $(n,d)\neq (3,4)$, in which case it has corank 1 (for all this  see \cite[Lemma 6.15, Remark 6.16]{Vo2}).

\subsection{Bitangent lines to a quartic surface in $\mP^3$} 
Let $X\subset \mP^3$ be a smooth quartic surface that contains no line. A line $l\subset\mP^3$ is a \emph{bitangent line} to $X$ if it is tangent to $X$ at each point of $l\cap X$, i.e., if the divisor cut out by $X$ on $l$ is of the form $2Z$, where $Z$ is a divisor of degree 2 on $l$. The locus
$$
S(X):=\{ [l]\in\mathbb G(1,3)\mid l\, {\rm{is}}\,  {\rm{bitangent}}\, {\rm{to}}\, X\}
$$
is a smooth surface in the grassmannian $\mathbb G(1,3)\subset \mP^5$ of lines in $\mP^3$ (see \cite [Lemma (1.1)]{W}) called 
the 	\emph { surface of bitangents to $X$}. Various properties of $S(X)$ are well known. For our purposes here we recall that $S(X)\subset \mP^5$ has degree 40, it is minimal, regular,  its geometric genus is 45 and $K^2_{S(X)}= 360$ (see 
\cite [\S 3]{CZ3}). 

\subsection{The quartic double solid}\label{ssec:4ic}
Given $X$, we can consider the double cover $\pi: \sX\longrightarrow \mP^3$ branched along $X$, that is called a \emph{quartic double solid}. This is a smooth Fano threefold of index 2, with $\Pic(\sX)\cong \mZ$, generated by $L:=\pi^*(\sO_{\mP^3}(1))$ (see \cite [p. 8]{W}). %A quartic double solids $\sX$ is unirational (see, e.g., \cite [Prop. 5.4]{Bea}). 
One defines \emph{lines} on $\sX$ the (smooth) curves $C$ on $\sX$ such that $C\cdot L=1$. Any such a curve maps to a bitangent line to $X$ and if $l$ is a bitangent line to $X$, then $\pi^{-1}(l)$ is the union of two distinct lines of $\sX$, intersecting along a 0--dimensional scheme of length 2. If $S_X$ denotes the Hilbert scheme of lines of $\sX$, then $S_X$ is irreducible and $\pi$ induces a morphism $f: S_X\longrightarrow S(X)$, that is \'etale of degree $2$, so that there is a 2--torsion class $\sigma\in {\rm NS}(S(X))$, such that  $f_*(\mathcal O_{S_X})=\mathcal O_{S(X)}\oplus \sigma$ (see \cite [Corollary (1.3)]{W}). Hence $S_X$ is also a smooth surface and it is known that it is irregular, with irregularity $10$ and geometric genus 101 (see again 
\cite [\S 3]{CZ3}). It is known that the intermediate jacobian $J(\sX)$
of $\sX$ is isomorphic to the Albanese variety of $S_X$ (see \cite [Thm. (4.1)]{W}). Hence we have the \emph{Albanese morphism} $a_{S_X}: S_X\longrightarrow {\rm Alb}(S_X)\cong J(\sX)$. By \cite [Prop. (2.13)]{W} this is an immersion. It is also known  (see \cite [Diagram (2.14)]{W})  that there is a natural isomorphism
\begin{equation}\label{eq:iso}
T_{J(\sX),0}^\vee\cong H^0(\mP^3, \sO_{\mP^3}(2)).
\end{equation}
In what follows we will think of $\mP^3$ as $\mP(V^\vee)$, where $V$ is a complex vector space of dimension 4. Then the isomorphism \eqref{eq:iso} reads as
\begin{equation}\label{eq:iso1}
T_{J(\sX),0}\cong \Sym^2(V)^\vee
\end{equation}

\subsection{Some natural isomorphisms}

Since we have the \'etale  double cover $f: S_X\longrightarrow S(X)$, accordingly there is an involution $\iota: S_X\longrightarrow S_X$. If $\sF$ is a sheaf on $S_X$ such that $\iota^*\sF=\sF$, then for any cohomology space $H^i(S_X,\sF)$ we denote by $H^i(S_X,\sF)^+$ [resp. by $H^i(S_X,\sF)^-$] the invariant [resp. the antiinvariant] subspace of $H^i(S_X,\sF)$ by the involution $\iota$.

\begin{prop}\label{isocoomologia} There are  natural isomorphisms
\begin{enumerate}[(i)]
\item $ \bigwedge^2 H^0(S_X,\Omega^1_{S_X})\cong H^0(S_X,\Omega^2_{S_X})^{+}$;
\item $ H^0(S_X,\Omega^1_{S_X})\otimes_{\mathbb C}H^1(S_X,\sO_{S_X})\cong H^1(S_X,\Omega^1_{S_X})^{+} $.
\end{enumerate}
\end{prop}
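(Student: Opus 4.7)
My plan is to construct both isomorphisms as the natural wedge and cup product maps, realize them as pullbacks from the intermediate Jacobian $J:=J(\sX)$ along the Albanese immersion $a_{S_X}\colon S_X \hookrightarrow J$, and then conclude by combining injectivity with a dimension count. The first step is to pin down the action of $\iota$ on the relevant Hodge pieces. Since $f\colon S_X \to S(X)$ is \'etale and $S(X)$ is regular, the decomposition $f_*\Omega^j_{S_X} = \Omega^j_{S(X)} \oplus (\Omega^j_{S(X)}\otimes \sigma)$ shows that the $\iota$--invariants $H^0(S_X,\Omega^1_{S_X})^+ \cong H^0(S(X),\Omega^1_{S(X)})$ and $H^1(S_X,\sO_{S_X})^+ \cong H^1(S(X),\sO_{S(X)})$ both vanish. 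Hence $\iota$ acts as $-1$ on both $H^0(\Omega^1_{S_X})$ and $H^1(\sO_{S_X})$, so as $+1$ on $\bigwedge^2 H^0(\Omega^1_{S_X})$ and on $H^0(\Omega^1_{S_X}) \otimes H^1(\sO_{S_X})$.

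Because $J$ is abelian, one has the canonical identifications $H^0(J,\Omega^2_J) \cong \bigwedge^2 H^0(J,\Omega^1_J)$ and $H^1(J,\Omega^1_J) \cong H^0(J,\Omega^1_J) \otimes H^1(J,\sO_J)$, while the Albanese property identifies $H^0(J,\Omega^1_J) \cong H^0(S_X,\Omega^1_{S_X})$ and $H^1(J,\sO_J) \cong H^1(S_X,\sO_{S_X})$. Pulling back along $a_{S_X}$ then yields the two natural maps
\[
\bigwedge^2 H^0(\Omega^1_{S_X}) \longrightarrow H^0(\Omega^2_{S_X}), \qquad H^0(\Omega^1_{S_X}) \otimes H^1(\sO_{S_X}) \longrightarrow H^1(\Omega^1_{S_X}),
\]
which at the level of $S_X$ coincide with the wedge product and the cup product. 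Since $\iota$ lifts (up to translation) to an involution of $J$ acting as $-1$ on $H^0(\Omega^1_J)$, these pullback maps are $\iota$--equivariant, so their images land in $H^0(\Omega^2_{S_X})^+$ and $H^1(\Omega^1_{S_X})^+$ respectively.

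For the dimension count, using $q(S_X)=10$ the left hand sides have dimensions $\binom{10}{2}=45$ and $10\cdot 10 = 100$. On the right, the isotypical decomposition above identifies $H^0(\Omega^2_{S_X})^+$ with $H^0(\Omega^2_{S(X)})$, of dimension $p_g(S(X))=45$, and $H^1(\Omega^1_{S_X})^+$ with $H^{1,1}(S(X))$. To compute $h^{1,1}(S(X))$ I would use Noether's formula for $S(X)$: with $\chi(\sO_{S(X)})=46$ and $K_{S(X)}^2=360$, one gets $c_2(S(X))=192$, hence $b_2(S(X))=190$ and $h^{1,1}(S(X)) = 190 - 2p_g(S(X)) = 100$. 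Dimensions thus match on both sides.

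The main obstacle is the injectivity of the two pullback maps. Since $a_{S_X}$ is an immersion (cf. \cite[Prop.\ (2.13)]{W}) and $S_X$ generates $J$ as a group (a defining property of the Albanese), the image $a_{S_X}(S_X)$ is a non--degenerate surface in $J$; a Gauss--map argument --- equivalently, the standard Hodge--theoretic fact that for a non--degenerate smooth subvariety $X\subset A$ of an abelian variety the pullback $H^k(A,\mathbb C)\to H^k(X,\mathbb C)$ is injective for $k\le \dim X$ --- gives injectivity of $a_{S_X}^*$ on the summands $H^0(J,\Omega^2_J)$ and $H^1(J,\Omega^1_J)$ of $H^2(J,\mathbb C)$. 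Combined with the matching dimensions from the previous paragraph, this yields the two claimed isomorphisms (i) and (ii).
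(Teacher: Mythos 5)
Your reductions are all sound and match what Welters actually does: the identification of the $\iota$--action (minus one on $H^0(S_X,\Omega^1_{S_X})$ and on $H^1(S_X,\sO_{S_X})$ because $S(X)$ is regular), the realization of both maps as pullbacks along $a_{S_X}$ from $J(\sX)\cong{\rm Alb}(S_X)$, and the dimension counts ($\binom{10}{2}=45$ against $p_g(S(X))=45$, and $10^2=100$ against $h^{1,1}(S(X))=100$ via Noether's formula) are correct, so everything hinges on injectivity of $a_{S_X}^*$ on $H^0(J(\sX),\Omega^2)$ and $H^1(J(\sX),\Omega^1)$. (The paper itself simply cites formulae (3.61), (3.62) of \cite{W}, where this injectivity is the whole content.)

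The gap is that the ``standard Hodge--theoretic fact'' you invoke is false under the hypotheses you actually verify. ``Immersion plus generation of the Albanese'' does not give injectivity of $H^k(A,\C)\to H^k(X,\C)$ for $k\le\dim X$: take $X=C_1\times C_2$ embedded by its Albanese map in $A=J(C_1)\times J(C_2)$ with $g(C_1)\ge 2$; this smooth surface generates $A$, yet all of $\bigwedge^2H^0(\Omega^1_{J(C_1)})$ restricts to zero in $H^0(\Omega^2_{C_1\times C_2})$. What is really needed is that $S_X$ be a \emph{nondegenerate} subvariety of $J(\sX)$ in the sense of Ran--Debarre (every quotient abelian variety $J(\sX)\to B$ maps $S_X$ onto a subvariety of dimension $\min(2,\dim B)$; equivalently the Gauss map is generically finite), and for a surface nondegeneracy is \emph{equivalent} to injectivity of $H^0(A,\Omega^2_A)\to H^0(V,\Omega^2_V)$ --- that is, to part (i) itself. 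So the argument is circular exactly at the decisive step: the geometric input you would have to supply is the non--formal content of Welters' formulae, which he derives from the explicit description of the conormal bundle of $S_X$ in $J(\sX)$ (the quartic--double--solid analogue of the Clemens--Griffiths tangent bundle theorem), not from generation. A smaller point: even granting nondegeneracy, injectivity on the $(1,1)$--piece $H^1(J(\sX),\Omega^1)$ deserves its own justification, since the kernel of $H^2(J(\sX),\mQ)\to H^2(S_X,\mQ)$ is a sub--Hodge structure whose $(2,0)$--part vanishes, and that alone does not force it to be zero.
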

\begin{proof} See  \cite[formulae (3.61), (3.62)]{W}. \end{proof}
\begin{cor}\label{coomologiadellaalbanese}
One has
\begin{enumerate}[(i)]
\item $H^{0}({\rm{Alb}}(S_X),\Omega^2_{{\rm{Alb}}(S_X)})\simeq H^0(S_X,\Omega^2_{S_X})^{+}$;
\item $ H^{1}({\rm{Alb}}(S_X),\Omega^1_{{\rm{Alb}}(S_X)})\simeq H^0(S_X,\Omega^1_{S_X})\otimes_{\mathbb C}H^1(S_X,\sO_{S_X}).$
\end{enumerate}
\end{cor}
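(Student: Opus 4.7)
The plan is to combine Proposition \ref{isocoomologia} with the Hodge decomposition on the abelian variety $A := {\rm Alb}(S_X)$, via the universal property of the Albanese morphism $a_{S_X}\colon S_X \to A$.

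First I recall two standard facts. (a) For an abelian variety $A$ of dimension $g$, the cotangent bundle is trivial, $\Omega^1_A \cong \sO_A^{\oplus g}$, whence for all $p,q$ there is a canonical decomposition $H^q(A, \Omega^p_A) \cong \bigwedge^p H^0(A, \Omega^1_A) \otimes_{\mC} \bigwedge^q H^1(A, \sO_A)$. (b) The pullback $a_{S_X}^*$ induces isomorphisms $H^0(A, \Omega^1_A) \xrightarrow{\sim} H^0(S_X, \Omega^1_{S_X})$ (universal property of the Albanese) and, dually, $H^1(A, \sO_A) \xrightarrow{\sim} H^1(S_X, \sO_{S_X})$.

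Given these, part (i) reduces to the chain $H^0(A, \Omega^2_A) \cong \bigwedge^2 H^0(A, \Omega^1_A) \cong \bigwedge^2 H^0(S_X, \Omega^1_{S_X}) \cong H^0(S_X, \Omega^2_{S_X})^+$, where the last identification is exactly Proposition \ref{isocoomologia}(i). Part (ii) follows by the same template: $H^1(A, \Omega^1_A) \cong H^0(A, \Omega^1_A) \otimes_{\mC} H^1(A, \sO_A) \cong H^0(S_X, \Omega^1_{S_X}) \otimes_{\mC} H^1(S_X, \sO_{S_X})$, which is the asserted isomorphism.

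There is essentially no obstacle: the corollary is a bookkeeping consequence of the triviality of $\Omega^1_A$ together with Proposition \ref{isocoomologia}(i), and every isomorphism used is canonical (in particular compatible with the natural action of the involution $\iota$, since $A = {\rm Alb}(S_X)$ is functorially built from $S_X$). One may further observe that Proposition \ref{isocoomologia}(ii) identifies the right-hand side of (ii) with $H^1(S_X,\Omega^1_{S_X})^+$; thus the corollary implicitly realizes $H^1(A,\Omega^1_A)$ as the $\iota$-invariant part of $H^{1,1}(S_X)$, which is the form of the statement actually needed when relating the intermediate Jacobian $J(\sX) \cong A$ to the Hodge theory of $S_X$.
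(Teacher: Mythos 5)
Your proof is correct and follows essentially the same route as the paper: part (i) is Proposition \ref{isocoomologia}(i) combined with the Albanese pullback isomorphism on $1$-forms, and part (ii) uses the standard fact that for a complex torus $A$ the wedge map $H^{1,0}(A)\otimes H^{0,1}(A)\to H^{1,1}(A)$ is an isomorphism together with $H^1(A,\sO_A)\cong H^1(S_X,\sO_{S_X})$ (which you state correctly; the paper's proof has a small typo there, writing $H^0$ instead of $H^1$ on the Albanese side).
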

\begin{proof} Part (i) follows immediately by part (i) of Proposition \ref{isocoomologia}. As for part (ii), it follows by  a general property of complex  tori. Indeed, if $A$ is a complex torus of dimension $n$, then one  has the obvious map
$$
H^0(A,\Omega^1_A)\otimes H^1( A,\sO_A)\longrightarrow H^1(A, \Omega^1_A)
$$
that boils down to the map
$$
H^{1,0}(A)\otimes H^{0,1}(A)\longrightarrow H^{1,1}(A), \quad \alpha\otimes \beta \mapsto \alpha\wedge \beta
$$
which is an isomorphism. Since $H^0(S_X,\Omega^1_{S_X})\cong H^0({\rm{Alb}}(S_X),\Omega^1_{{\rm{Alb}}(S_X)})$ and $H^1(S_X,\sO_{S_X})\cong H^0({\rm{Alb}}(S_X),\sO_{{\rm{Alb}}(S_X)})$, the assertion follows.
\end{proof}

\begin{cor}\label{coomologiadellasuperficiedellebitangenti} There are natural identifications
$$H^0(S(X),\Omega^2_{S(X)})\simeq H^{0}({\rm{Alb}}(S_X),\Omega^2_{{\rm{Alb}}(S_X)})$$
$$H^1(S(X),\Omega^1_{S(X)})\simeq H^{1}({\rm{Alb}}(S_X),\Omega^1_{{\rm{Alb}}(S_X)}).$$
 \end{cor}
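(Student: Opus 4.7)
The plan is to reduce both identifications to Corollary \ref{coomologiadellaalbanese} by exploiting the \'etale double cover $f\colon S_X\to S(X)$ with involution $\iota$. Since $f$ is \'etale, I have $\Omega^p_{S_X}\simeq f^*\Omega^p_{S(X)}$ for every $p\geq 0$. I would combine this with the decomposition $f_*\sO_{S_X}=\sO_{S(X)}\oplus\sigma$ and the projection formula to obtain
$$f_*\Omega^p_{S_X}\simeq \Omega^p_{S(X)}\oplus \bigl(\Omega^p_{S(X)}\otimes\sigma\bigr).$$
Because $f$ is finite, $H^i(S_X,\Omega^p_{S_X})\simeq H^i(S(X),f_*\Omega^p_{S_X})$, and the summand splitting above matches the decomposition of the cohomology into $\iota$-invariants and $\iota$-antiinvariants. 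In particular
$$H^i(S_X,\Omega^p_{S_X})^{+}\simeq H^i(S(X),\Omega^p_{S(X)}).$$

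Specializing to $(i,p)=(0,2)$ and $(i,p)=(1,1)$ would give
$$H^0(S_X,\Omega^2_{S_X})^{+}\simeq H^0(S(X),\Omega^2_{S(X)}),\qquad H^1(S_X,\Omega^1_{S_X})^{+}\simeq H^1(S(X),\Omega^1_{S(X)}).$$
The first identification of the corollary then follows immediately from Corollary \ref{coomologiadellaalbanese} (i). For the second, I would first combine Corollary \ref{coomologiadellaalbanese} (ii) with Proposition \ref{isocoomologia} (ii) to match $H^1(\mathrm{Alb}(S_X),\Omega^1_{\mathrm{Alb}(S_X)})$ with $H^1(S_X,\Omega^1_{S_X})^{+}$, and then invoke the displayed formula above.

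Since the whole argument reduces to bookkeeping with the projection formula on top of the two previously established results, I do not foresee a genuine obstacle. The only point that will deserve some care is checking that the summand $\Omega^p_{S(X)}$ of $f_*\Omega^p_{S_X}$ really corresponds to the $\iota$-invariant piece of cohomology rather than just matching as an abstract subsheaf; this follows from the fact that, by construction of the double cover, $\iota$ acts as $+1$ on the $\sO_{S(X)}$-summand and as $-1$ on the $\sigma$-summand of $f_*\sO_{S_X}$, and this eigenspace decomposition is transported through the projection formula to the one of $f_*\Omega^p_{S_X}$.
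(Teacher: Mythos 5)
Your proposal is correct and follows essentially the same route as the paper: identify $H^i(S(X),\Omega^p_{S(X)})$ with the $\iota$-invariant part $H^i(S_X,\Omega^p_{S_X})^{+}$ via the \'etale double cover, then conclude from Proposition \ref{isocoomologia} and Corollary \ref{coomologiadellaalbanese}. The paper simply states these identifications without the projection-formula bookkeeping that you spell out.
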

\begin{proof} One has $H^0(S(X),\Omega^2_{S(X)})=H^0(S_X,\Omega^2_{S_X})^{+}$ and  $H^1(S(X),\Omega^1_{S(X)})=H^1(S_X,\Omega^1_{S_X})^{+}$. Hence the claim follows by  Proposition \ref {isocoomologia} and  Corollary \ref{coomologiadellaalbanese}. \end{proof}

\section{Infinitesimal deformations}\label{sec:natis}

\subsection{The equivariant homorphism} Let $X$ be a smooth quartic surface in $\mP^3=\mP(V^\vee)$, not containing any line, with equation $F=0$, where $F\in \Sym^4(V)$. 
Fix $G={\rm{Sym}}^4(V)$. This determines a first order infinitesimal  embedded deformation $F+\epsilon G=0$, over the ring of dual numbers $\mathbb C[\epsilon]$. Accordingly, this gives an element
 $\xi_G\in H^1(X, T_X)$ and it induces an infinitesimal deformation of $S(X)$, $\sX$, $S_X$, $J(\sX)\cong {\rm{Alb}}({S_X})$. 
 
 We will denote by $\xi^{  {\rm{Alb}}({S_X})  }_{G}\in H^1({\rm{Alb}}_{S_X}  , T_{{\rm{Alb}}(S_X)})\cong H^1(J(\sX), T_{J(\sX)})$ the first order infinitesimal deformation induced on ${\rm{Alb}}(S_X)\cong J(\sX)$, that is a principally polarised abelian variety (ppav). Hence the space of first order infinitesimal deformations of $J(\sX)$ as a ppav is 
 $$
 \Sym^2 (T_{J(\sX),0})\cong \Sym^2(\Sym^2(V)^\vee)=\Sym^4(V)^\vee,
 $$
  (remember \eqref {eq:iso1})  and therefore $\xi^{  {\rm{Alb}}({S_X})  }_{G}\in \Sym^4(V)^\vee$. 
 We have thus defined a linear map
 $$
 \Phi_X\colon {\rm{Sym}}^4(V)\longrightarrow {\rm{Sym}}^4(V)^\vee,\, \, G\mapsto {\xi^{ {\rm{Alb}}({S_X} ) }_{G}}.
 $$
 
 \begin{prop}\label{livello abeliano} Let $X$ be a general quartic surface in $\mP^3$ with equation $F=0$. Then the map $\Phi_X$ 
 is a ${\rm PGL}(V)$--equivariant homomorphism which factors through $R_F^4$ and the induced homomorphism $\Phi_{F}\colon R_F^4\to{\rm{Sym}}^4(V)^\vee$ is injective.
 \end{prop}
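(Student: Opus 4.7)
The plan is to treat the three assertions in turn; the injectivity is the substantive one. For \emph{equivariance}: the whole chain $F\mapsto X\mapsto \sX\mapsto S_X\mapsto J(\sX)\cong\mathrm{Alb}(S_X)$ is canonically associated with the pair $(\mP(V^\vee),F)$, and the ppav structure on $J(\sX)$ is intrinsic. Every $g\in\PGL(V)$ transports the Kodaira--Spencer class $\xi_G^{\mathrm{Alb}(S_X)}$ covariantly under the natural actions on the source $\Sym^4(V)$ and target $\Sym^4(V)^\vee$, yielding $\PGL(V)$-equivariance of $\Phi_X$.

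For the \emph{factoring through $R_F^4$}: the class $\xi_G^{\mathrm{Alb}(S_X)}$ depends on $G$ only through the abstract deformation $\xi_G\in H^1(X,T_X)$, since a trivial first-order deformation of $X$ lifts functorially to trivial deformations of $\sX$, $S_X$ and the ppav $J(\sX)$. By \S\ref{ssec:emb} the Kodaira--Spencer map $ks\colon\Sym^4(V)\to H^1(X,T_X)$ has kernel $J_F^4$, so $\Phi_X$ descends to the claimed map $\Phi_F\colon R_F^4\to\Sym^4(V)^\vee$.

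The substantive step is \emph{injectivity}. The strategy is to identify $\Phi_F$ with a multiplication map in the Gorenstein Artin ring $R_F$ and conclude by Macaulay duality. By Welters' description of the IVHS of the quartic double solid \cite{W}, $H^{2,1}(\sX)\cong\Sym^2(V)$ compatibly with \eqref{eq:iso1}, and cup product with $\xi_G\in H^1(\sX,T_\sX)$ is realised by multiplication by $G$ in $R_F$. Projected onto the $\PGL(V)$-invariant summand $\Sym^4(V)^\vee\subset\Sym^2(\Sym^2(V)^\vee)$, the resulting symmetric form reads, for $P,Q\in\Sym^2(V)$,
$$
\Phi_F(G)(P,Q)\;=\;[\,P\cdot Q\cdot G\,]\in R_F^8\cong\mathbb{C},
$$
with $R_F^8$ the one-dimensional socle (since $(n+1)(d-2)=4\cdot 2=8$). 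The products $P\cdot Q$ span $\Sym^4(V)$ and therefore surject onto $R_F^4$; hence vanishing of $\Phi_F(G)$ would imply $G\cdot M=0$ in $R_F^8$ for every $M\in R_F^4$, and perfectness of the Macaulay pairing $R_F^4\otimes R_F^4\to R_F^8$ then forces $G=0$. The principal obstacle is exactly this explicit translation of $\xi_G^{\mathrm{Alb}(S_X)}$ into Jacobian-ring multiplication: one must track the ppav deformation through the identifications $T_{J(\sX),0}\cong H^{1,2}(\sX)\cong\Sym^2(V)^\vee$ and verify that the resulting symmetric bilinear form on $\Sym^2(V)$ factors through $\Sym^4(V)$ via polynomial multiplication; once this compatibility is in hand, Macaulay duality in $R_F$ closes the argument.
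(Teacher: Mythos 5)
Your argument is correct in outline but follows a genuinely different route from the paper's. The paper disposes of equivariance and of the factorisation through $R_F^4$ exactly as you do, but proves injectivity \emph{globally}: assuming $\Phi_F([G_1])=\Phi_F([G_2])$ with $[G_1]\neq[G_2]$, it uses unobstructedness of deformations of ppav's to prolong the two first--order deformations of $J(\sX)$ to isomorphic one--parameter families of intermediate Jacobians, and then invokes Debarre's Torelli theorem for quartic double solids \cite{D} to conclude that the corresponding quartics are isomorphic, a contradiction. Your route is instead purely infinitesimal and algebraic: identify $\Phi_F(G)$ with the symmetric form $(P,Q)\mapsto [PQG]\in R_F^8\cong\mC$ and conclude by perfectness of the Macaulay pairing $R_F^4\otimes R_F^4\to R_F^8$ together with the surjectivity of $\Sym^2(\Sym^2(V))\to R_F^4$. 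If the identification holds, your argument is arguably stronger: it needs no genericity of $X$ and no global Torelli input, only that $F$ is smooth (so that the partials form a regular sequence of cubics and $R_F$ is Gorenstein with socle in degree $8$). The one point you flag as the ``principal obstacle'' is real but not a gap in substance: the fact that cup product with $\xi_G$ on $H^{2,1}(\sX)\to H^{1,2}(\sX)$ is multiplication by $G$ in the ring $R_F$ (realised as the Jacobian ring of the weighted hypersurface $w^2=F$ in $\mP(1,1,1,1,2)$), and that the polarisation pairing is the Macaulay pairing, is the standard Griffiths--type description of the IVHS of a double solid, available in \cite{Cl} and implicit in \cite[Diagram (2.14) and \S 3]{W}; the self--adjointness of $\cup\,\xi_G$ with respect to the polarisation is what guarantees that the resulting form is symmetric, i.e.\ lands in $\Sym^4(V)^\vee\subset\Sym^2(\Sym^2(V)^\vee)$. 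With that reference supplied, your proof is complete and independent of \cite{D}.
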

 \begin{proof} That $\Phi_X$  is ${\rm PGL}(V)$--equivariant is obvious. If $G\in J^4_F$, we know that $\xi_G=0$ (see \S \ref {ssec:emb}), hence also $\xi^{ {\rm{Alb}}({S_X} ) }_{G}$ is trivial. Thus $J^4_F\subset{\rm{Ker}}(\Phi_X)$, hence $\Phi_X$  factors through $R_F^4$. 
 
  Let us now prove the injectivity of $\Phi_{F}$. Let $[G_1],[G_2]\in R_F^4$ be such that $[G_1]\neq [G_2]$. Suppose that $\Phi_{F}([G_1])=\Phi_{F}([G_2])$, i.e.,  ${\xi^{  {\rm{Alb}}(S_X) }_{G_1} }={\xi^{ {\rm{Alb}}(S_X)}_{G_2} }$. Let $\pi_1\colon \sJ_{G_1}\to \Spec(\mathbb C[\epsilon])$, $\pi_2\colon \sJ_{G_2}\to \Spec(\mathbb C[\epsilon])$ be the corresponding infinitesimal first order deformations  of intermediate jacobians of the quartic double solids corresponding to the quartics with equations $\{ X_{1,\epsilon}=F+\epsilon G_1\}$, $\{X_{2,\epsilon}=F+\epsilon G_2\}$ respectively. Then $\pi_1$ and $\pi_2$ are the same  infinitesimal deformation for the intermediate jacobian $J(\sX)$. Since infinitesimal deformation of principally polarised abelian varieties are unobstructed, we can prolong $\pi_1\colon \sJ_{G_1}\to \Spec(\mathbb C[\epsilon])$, $\pi_2\colon \sJ_{G_2}\to \Spec(\mathbb C[\epsilon])$ to families 
 $\pi'_1\colon \sY_{G_1}\to \mathbb D$, $\pi'_2\colon \sY_{G_2}\to \mathbb D$ over a disk $\mathbb D$ given by the intermediate jacobians of the quartic double solids given by  $\{ X_{1,t}=F+tG_1\}_{t\in\mathbb D}$, $\{X_{2,t}=F+tG_2\}_{t\in\mathbb D}$ respectively, and we can assume that  they give the same deformation of  $J(\sX)$. Thus, for generic $t\in\mathbb D$, one has that $J(\sX_{1,t})$ is isomorphic to $J(\sX_{2,t})$ as principally polarised abelian varieties. By \cite[Th\'eor\`eme (9.2)]{D} we have that $X_{1,t}$ is isomorphic to $X_{2,t}$. This implies that $[G_1]=[G_2]$, a contradiction.
\end{proof}

 \subsection{IVHS of the Albanese variety}
 The space of infinitesimal deformations of the Abelian variety ${\rm{Alb}}(S_X)$ as a complex torus is $H^1({\rm{Alb}}(S_X),T_{{\rm{Alb}}(S_X)})$. Since  $X\subset \mP^3=\mathbb P(V^\vee)$,  and the tangent space of $J(\sX)\cong {\rm{Alb}}(S_X)$ at $0$ is ${\rm{Sym}}^2(V)^\vee$ (see \eqref {eq:iso1}), then 
 $T_{{\rm{Alb}}(S_X)} ={\rm{Sym}}^2(V)^{\vee}\otimes_{\mathbb C}\sO_{{\rm{Alb}}(S_X)}$. Hence  
 $$
H^1({\rm{Alb}}(S_X),T_{{\rm{Alb}}(S_X)})={\rm{Sym}}^2(V)^{\vee}\otimes_{\mathbb C} H^1({\rm{Alb}}(S_X),\sO_{{\rm{Alb}}(S_X)})={\rm{Sym}}^2(V)^{\vee}\otimes_{\mathbb C}{\rm{Sym}}^2(V)^{\vee}
 $$
 because $H^1({\rm{Alb}}(S_X),\sO_{{\rm{Alb}}(S_X)})\cong {\rm{Sym}}^2(V)^{\vee}$. 
The space of first order infinitesimal deformations of the ppav  $({\rm{Alb}}(S_X), \theta_{{\rm{Alb}}(S_X)})$ is
 $$
{\rm{Sym}}^2(T_{{\rm{Alb}}(S_X),0})={\rm{Sym}}^2({\rm{Sym}}^2(V)^\vee)= {\rm{Sym}}^4(V)^{\vee}
$$
To compute the differential of the period map is equivalent to compute for every non--negative integer $q$, the  map induced by the cup product
$$
 H^1({\rm{Alb}}(S_X),T_{{\rm{Alb}}(S_X)})\to {\rm{Hom}}(H^{q}({\rm{Alb}}(S_X),\Omega^p_{{\rm{Alb}}(S_X)}), H^{q+1}({\rm{Alb}}(S_X),\Omega^{p-1}_{{\rm{Alb}}(S_X)})).
 $$
For abelian varieties the cup product reduces to the homomorphism induced by the contraction. In particular the homomorphism
 $$
 H^1({\rm{Alb}}(S_X),T_{{\rm{Alb}}(S_X)})\to {\rm{Hom}}(H^{0}({\rm{Alb}}(S_X),\Omega^2_{{\rm{Alb}}(S_X)}), H^{1}({\rm{Alb}}(S_X),\Omega^{1}_{{\rm{Alb}}(S_X)}))
 $$
 translates into the  contraction homomorphism
 $$
 {\rm{Sym}}^2(V)^{\vee}\otimes_{\mathbb C}{\rm{Sym}}^2(V)^{\vee}\to {\rm{Hom}}( \bigwedge^2{\rm{Sym}}^2(V)\to  {\rm{Sym}}^2(V)\otimes {\rm{Sym}}^2(V)^{\vee})
 $$
 defined over indecomposable vectors as follows
 $$
 (\alpha\otimes \beta)(\phi\wedge \psi)=\alpha(\phi)\cdot\psi\otimes \beta-\alpha(\psi)\cdot\phi\otimes \beta
 $$
 where $\alpha,\beta\in {\rm{Sym}}^2(V)^{\vee}$ and $\phi,\psi\in {\rm{Sym}}^2(V)$.
 
 In general, for any polarised abelian variety $(A, \theta_{A})$, one has:
 
 \begin{prop}\label{abelianocaso} The natural homomorphism from the space  of infinitesimal deformations of a polarised Abelian variety $(A, \theta_{A})$ of dimension $n\geq 2$ to the $(2,0)$--IVHS is injective.
  \end{prop}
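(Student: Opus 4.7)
The plan is to make the $(2,0)$-contraction explicit in invariant form and reduce its injectivity to a rank-one observation in $W^\vee \otimes W$, where $W := T_{A,0}$. Using the polarisation to identify $H^1(A,\sO_A) \cong W$, the source $H^1(A,T_A) \cong W \otimes W$ and the target $H^1(A,\Omega^1_A) \cong W^\vee \otimes W$ are as in the preceding paragraphs, with the polarised deformations sitting as $\Sym^2(W) \subset W \otimes W$. It then suffices to prove the stronger statement that the whole contraction
\[
\mu \colon W \otimes W \longrightarrow \Hom\bigl(\textstyle\bigwedge^2 W^\vee,\, W^\vee \otimes W\bigr)
\]
is injective for $n \geq 2$; the assertion for $\Sym^2(W)$ then follows by restriction.

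For $\sigma \in W \otimes W$, let $\sigma^\sharp \colon W^\vee \to W$ be the associated linear map under the canonical isomorphism $W \otimes W \cong \Hom(W^\vee,W)$. Expanding the formula $(\alpha\otimes\beta)(\phi\wedge\psi) = \alpha(\phi)\psi\otimes\beta - \alpha(\psi)\phi\otimes\beta$ displayed above the statement on a general $\sigma = \sum_i \alpha_i \otimes \beta_i$ gives
\[
\mu(\sigma)(\phi \wedge \psi) \;=\; \psi \otimes \sigma^\sharp(\phi) \,-\, \phi \otimes \sigma^\sharp(\psi), \qquad \phi,\psi \in W^\vee.
\]
Since $\sigma \mapsto \sigma^\sharp$ is itself an isomorphism, it suffices to deduce $\sigma^\sharp = 0$ from $\mu(\sigma) = 0$.

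The core step is a rank-one argument. Suppose $\mu(\sigma) = 0$ and $\sigma^\sharp \neq 0$, and pick $\phi_0 \in W^\vee$ with $\sigma^\sharp(\phi_0) \neq 0$. Because $n = \dim W^\vee \geq 2$, I can choose $\psi_0 \in W^\vee$ linearly independent from $\phi_0$. Then
\[
\psi_0 \otimes \sigma^\sharp(\phi_0) \;=\; \phi_0 \otimes \sigma^\sharp(\psi_0)
\]
in $W^\vee \otimes W \cong \mathrm{End}(W)$; the left-hand side is a nonzero rank-one endomorphism, so the same holds for the right-hand side, and equality of their kernels forces $\Ker \psi_0 = \Ker \phi_0$, making $\phi_0,\psi_0$ proportional, a contradiction. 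The delicate point is precisely the hypothesis $n \geq 2$, without which $\psi_0$ independent of $\phi_0$ cannot be chosen and the codomain $\bigwedge^2 W^\vee$ itself vanishes; apart from this, no obstacle arises beyond the identification of $H^1(A,\sO_A)$ with $W$ coming from the polarisation, already implicit in the preceding discussion.
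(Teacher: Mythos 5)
Your proof is correct and follows essentially the same route as the paper: identify the $(2,0)$--IVHS map with the explicit contraction $w_1\wedge w_2\mapsto \phi(w_1)\otimes w_2-\phi(w_2)\otimes w_1$ and observe that a nonzero $\phi$ yields a nonzero map. The only difference is that you prove the slightly stronger statement for all of $W\otimes W$ and spell out the rank-one tensor argument that the paper dismisses as ``immediate''.
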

  \begin{proof} Set $W=H^0(A, \Omega^1_A)$. The space  of infinitesimal deformations of the polarised Abelian variety $(A, \theta_{A})$ can be identified with  the space of symmetric homomorphism ${\rm{Hom}}^{s}(W,W^\vee)$ where $\phi\in {\rm{Hom}}^{s}(W,W^\vee)$ if and only if $\langle \phi(w_1),w_2\rangle=\langle \phi(w_2),w_1\rangle$. To compute the $(2,0)$--infinitesimal variation of Hodge structure induced by $\phi$ means to compute the homomorphism $\Phi\in {\rm{Hom}}(H^0(A,\Omega^2_A),H^1(A,\Omega_A^1))$ 
  $$
  \Phi\colon \bigwedge^2W\to W^\vee\otimes W
  $$
  given by $\Phi(w_1\wedge w_2)=\phi(w_1)\otimes w_2-\phi(w_2)\otimes w_1$. It is immediate that if $\phi\neq 0$ then $\Phi\neq 0$.
  \end{proof}
 
 In our special case the natural homomorphism from the space  of infinitesimal deformations of $({\rm{Alb}}(S_X), \theta_{{\rm{Alb}}(S_X)})$ to the $(2,0)$--IVHS  can be written as the  contraction injective homomorphism
 \begin{equation}\label{ilsigmaquattro}
\tau\colon {\rm{Sym}}^4(V)^\vee \to {\rm{Hom}}_{\mathbb C}( \bigwedge^2{\rm{Sym}}^2(V),{\rm{Sym}}^2(V)^\vee\otimes {\rm{Sym}}^2(V))
 \end{equation}
 which to $\phi\in{\rm{Sym}}^4(V)^\vee$ associates $\tau(\phi)\colon\bigwedge^2{\rm{Sym}}^2(V)\to {\rm{Sym}}^2(V)^\vee\otimes {\rm{Sym}}^2(V)$ such that 
 $$
 \tau(\phi)(q_1\wedge q_2)=\phi(q_1)\wedge q_2-\phi(q_2)\wedge q_1.
 $$

% \begin{rmk}\label{homomo} The fact that the homomorphsm $$\tau\colon {\rm{Sym}}^4V^\vee \to {\rm{Hom}}_{\mathbb C}( \bigwedge^2{\rm{Sym}}^2V,{\rm{Sym}}^2V^\vee\otimes {\rm{Sym}}^2V^{})$$ is injective can be shown independently from Theorem \ref{abelianocaso}.\end{rmk}

 \subsection{IVHS for the infinitesimal deformations coming from embedded deformations}  We keep the notation introduced above.
Let $G\in H^0(X,\sO_{\mP^3}(4))={\rm{Sym}}^4(V)$ and let $\xi_G\in H^1(X, T_X)$ be the first order  infinitesimal embedded deformation of $X$ given by $F+\epsilon G=0$, that we assume to be non--trivial. Let $\xi^{S(X)}_G\in H^1(S(X), T_{S(X)})$ be the induced first order  infinitesimal deformation of $S(X)$.
The double cover $f\colon S_X\to S$ is \'etale and we have $f_*\sO_{S_X}=\sO_{S(X)}\oplus \sigma$, where $\sigma$ is non--trivial such that $\sigma^{\otimes 2}=\sO_{S(X)}$. By 
 \eqref{prima decomposizione} we have
\begin{equation}\label{seconda decomposizione}
H^1(S_X, T_{S_X})= H^1(S(X), T_{S(X)})\oplus H^1(S(X), T_{S(X)}\otimes_{\sO_S}\sigma)
\end{equation}
By compatibility, we have that $\xi^{S(X)}_{G}=\pi^{+}(\xi^{S_X}_{G})$ where $\pi^{+}\colon H^1(S_X, T_{S_{X}})\to H^1(S(X), T_{S(X)})$ is the projection given by the above splitting of $H^1(S_X, T_{S_{X}})$ into invariant and anti--invariant subspaces. 

We stress that $G$ also induces the first order infinitesimal deformations $\xi^{S_X}_{G}\in H^1(S_X, T_{S_X})$ and $\xi^{  {\rm{Alb}}({S_X})  }_{G}\in H^1({\rm{Alb}}({S_X}), T_{{\rm{Alb}}({S_X})})$ of $S_X$ and of ${\rm{Alb}}({S_X})$. Recall that the Albanese morphism $a_{S_X}: S_X\longrightarrow {\rm{Alb}}({S_X})$ is an immersion.  Consider then the sequence 
$$
0\longrightarrow T_{S_{X}} \stackrel{d_{a_{S_X}}} {\longrightarrow} a_{S_X}^*T_{{\rm{Alb}}({S_X})}\longrightarrow \sN_{a_{S_X}}\longrightarrow 0
$$
which induces a homomorphism $d_{{a}_{{S_X}}}\colon H^1(S_X, T_{S_{X}})\longrightarrow H^1(S_X, a_{S_X}^{*}T_{{\rm{Alb}}({S_X})})$. We set $\eta:=d_{a_{S_X}}(\xi^{S_X}_{G})$. We have also  the natural pull--back homomorphism
$$
a_{S_X}^{*} \colon H^1({\rm{Alb}}({S_X}), T_{{\rm{Alb}}({S_X})})\to H^1(S_X, a_{S_X}^{*}T_{{\rm{Alb}}{(S_X})})
$$
and by  compatibility we have 
$$a_{S_X}^{*} (\xi^{  {\rm{Alb}}({S_X})  }_{G})=\eta=d_{a_{S_X}}(\xi^{S_X}_{G}).$$ 
More precisely, since $\xi^{  {\rm{Alb}}({S_X})  }_{G}$ is an infinitesimal deformation of the ppav $({\rm{Alb}}(S_X), \theta_{{\rm{Alb}}(S_X)})$, then we may view $\xi^{  {\rm{Alb}}({S_X})  }_{G}$ as an element of ${\rm{Sym}}^4(V)^\vee$. 

%Hence, again by compatibility, the cup product
%$$\partial_{\xi^{}_{G} }\colon H^0(S_X,\Omega^2_{S_X})\to H^1(S_X,\Omega^1_{S_X})$$
%fits into the commutative square 

The compatibility also implies that the following diagram
	\begin{equation}\label{dallalbanese}
		\xymatrix{
		 H^{0}({\rm{Alb}}(S_X),\Omega^2_{{\rm{Alb}}(S_X)})\ar^{ a_{S_X}^{*} }[d] \ar^{
	{\xi^{{\rm{Alb}}({S_X})}_{G} }}[r]  & H^{1}({\rm{Alb}}(S_X),\Omega^1_{{\rm{Alb}}(S_X)}) \ar^{  a_{S_X}^{*}}[d]\\
			 H^0(S_X,\Omega^2_{S_X})\ar^{{\xi^{S_X}_{G} }}[r]& H^1(S_X,\Omega^1_{S_X})\\
		}
	\end{equation}
	commutes, where, by abuse of notation, ${\xi^{{\rm{Alb}}({S_X})}_{G} }$ and ${\xi^{S_X}_{G} }$ denote the cup products by $\xi^{{\rm{Alb}}({S_X})}_{G}$ and $\xi^{S_X}_{G}$ respectively. 

 \begin{thm}\label{finale} If $\xi_G\in H^1(X, T_X)$ is a non--trivial first order embedded deformation, then the infinitesimal deformation $\xi^{S(X)}_{G}$ gives a non--trivial IVHS.
 \end{thm}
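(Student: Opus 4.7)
The strategy is to propagate non-triviality along the chain of induced deformations
$\xi_G \leadsto \xi^{S(X)}_G \leadsto \xi^{S_X}_G \leadsto \xi^{{\rm Alb}(S_X)}_G$
by first establishing non-triviality of the IVHS at the abelian level, where the structure is rigid and purely linear--algebraic, and then descending back to $S(X)$ via the commutative diagram \eqref{dallalbanese}.

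First, I would observe that the factorization of the Kodaira--Spencer map through the injective homomorphism $\kappa\sigma\colon R^4_F\to H^1(X,T_X)$ recalled in \S\ref{ssec:emb} forces $[G]\neq 0$ in $R^4_F$ whenever $\xi_G\neq 0$. Proposition \ref{livello abeliano} then yields $\xi^{{\rm Alb}(S_X)}_G = \Phi_F([G])\neq 0$ in ${\rm Sym}^4(V)^\vee$, that is, a non-trivial first-order deformation of the ppav $({\rm Alb}(S_X),\theta_{{\rm Alb}(S_X)})$. Applying Proposition \ref{abelianocaso} to this non-zero class, the $(2,0)$--IVHS map
$$
\xi^{{\rm Alb}(S_X)}_G\colon H^0({\rm Alb}(S_X),\Omega^2_{{\rm Alb}(S_X)})\longrightarrow H^1({\rm Alb}(S_X),\Omega^1_{{\rm Alb}(S_X)})
$$
is non-zero.

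Next I would use the commutative diagram \eqref{dallalbanese} to transfer this non-triviality to $S_X$. By Corollary \ref{coomologiadellaalbanese} both vertical Albanese pull-back arrows of \eqref{dallalbanese} are isomorphisms onto the $\iota$--invariant subspaces $H^0(S_X,\Omega^2_{S_X})^+$ and $H^1(S_X,\Omega^1_{S_X})^+$, so commutativity forces the cup product by $\xi^{S_X}_G$ to be non-zero on $H^0(S_X,\Omega^2_{S_X})^+$. Since $\xi^{S_X}_G = f^*\xi^{S(X)}_G$ is itself $\iota$--invariant, cup product by it preserves the $\iota$--eigenspace decomposition, so the restriction of this map to the invariant pieces lands in $H^1(S_X,\Omega^1_{S_X})^+$. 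Corollary \ref{coomologiadellasuperficiedellebitangenti} canonically identifies these invariant subspaces with $H^0(S(X),\Omega^2_{S(X)})$ and $H^1(S(X),\Omega^1_{S(X)})$, and under these identifications the restricted cup product becomes precisely the cup product by $\xi^{S(X)}_G$ on the $(2,0)$--piece of $S(X)$, which is therefore non-zero. This is the desired non-trivial IVHS.

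The only point requiring care (rather than a deep obstacle) is the bookkeeping of compatibilities: one must verify, from functoriality of the cup product and the explicit isomorphisms in Proposition \ref{isocoomologia}, that cup product by $\xi^{S_X}_G$ restricted to the invariant Dolbeault classes really corresponds, under the \'etale pull--back $f^*$, to cup product by $\xi^{S(X)}_G$ on the downstairs classes, and symmetrically that Albanese pull--back matches the $(2,0)$ and $(1,1)$ identifications of Corollary \ref{coomologiadellaalbanese}. Once these compatibilities are recorded, the conclusion follows immediately from the chain above.
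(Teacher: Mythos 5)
Your proof is correct and takes essentially the same route as the paper's: non-vanishing of $[G]$ in $R^4_F$, injectivity of $\Phi_F$ (Proposition \ref{livello abeliano}), injectivity of the $(2,0)$--IVHS at the abelian level (Proposition \ref{abelianocaso}), and descent to $S(X)$ via diagram \eqref{dallalbanese} together with the identifications of Corollaries \ref{coomologiadellaalbanese} and \ref{coomologiadellasuperficiedellebitangenti}. Your explicit bookkeeping of the $\iota$--eigenspace decomposition in the last step is just a more detailed rendering of the ``compatibility'' the paper invokes when passing from diagram \eqref{dallalbanese} to \eqref{dallalbanesedue}.
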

 \begin{proof} By Proposition \ref{livello abeliano} the map
 $$ \Phi\colon {\rm{Sym}}^4(V)\to {\rm{Sym}}^4(V)^\vee,\quad  G\mapsto {\xi^{  {\rm{Alb}}(S_X)}_{G} }$$ 
 factors through $R_4$ and $\Phi_{F}\colon R_4\to{\rm{Sym}}^4(V)^\vee$ is injective. Hence the homomorphism 
 $$\tau\circ \Phi_{F}: R_4\to {\rm{Hom}}_{\mathbb C}( \bigwedge^2{\rm{Sym}}^2(V),{\rm{Sym}}^2(V)\otimes {\rm{Sym}}^2(V)^{\vee})$$ 
 is also injective by Proposition \ref {abelianocaso} (recall that $\tau$ is defined in\eqref {ilsigmaquattro}). By compatibility, by Corollary \ref{coomologiadellaalbanese} and  Corollary \ref{coomologiadellasuperficiedellebitangenti} the commutative diagram (\ref{dallalbanese}) can be interpretated as the following one
\begin{equation}\label{dallalbanesedue}
		\xymatrix{
		 H^{0}(S(X),\Omega^2_{S(X)})\ar^{  }[d] \ar^{{\xi^{S(X)}_{G} }}[r]  & H^{1}(S(X),\Omega^1_{S(X)}) \ar^{  }[d]\\
			 H^0(S_X,\Omega^2_{S_X})\ar^{{\xi^{S_X}_{G} }}[r]& H^1(S_X,\Omega^1_{S_X})\\
		}
	\end{equation}
where the vertical arrows are injective since they are induced by the identifications of Corollary \ref{coomologiadellasuperficiedellebitangenti}, and the horizontal rows are cup products as usual. Finally the first row of diagram \eqref {dallalbanese}, that identifies with the first row of diagram \eqref{dallalbanesedue},  computes the $(2,0)$--IVHS of $({\rm{Alb}}(S_X), \theta_{{\rm{Alb}}(S_X)})$ and by  Proposition 
\ref{abelianocaso} it is non trivial.\end{proof}

We recall that if $S$ is a smooth, projective, irreducible algebraic surface, one says that a non--trivial  infinitesimal deformation $\xi\in H^1(S,T_S)$ of $S$ \emph{satisfies the infinitesimal Torelli claim} if at least one of the two cup product maps $\xi^i\colon H^0(S, \Omega_S^i)\to H^1(S, \Omega_S^{i-1})$ associated to $\xi$, for $i=1,2$, is not trivial. The surface $S$ is said to \emph{satisfies the infinitesimal Torelli claim} if  any non--trivial  infinitesimal deformation $\xi\in H^1(S,T_S)$ satisfies the infinitesimal Torelli claim.

Then we have:

\begin{cor}\label{perkuranishi} If $X$ is a general quartic surface in $\mP^3$, the infinitesimal deformations of $S(X)$  induced by the embedded infinitesimal deformations of $X$ satisfy the infinitesimal Torelli claim. Moreover they fill up a hyperplane of $H^1(S(X), T_{S(X)})$, that has dimension 20.
\end{cor}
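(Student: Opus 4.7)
The plan is to package Theorem \ref{finale} into a dimension statement about the subspace of $H^1(S(X),T_{S(X)})$ spanned by embedded deformations of $X$. I would start from the linear map
$$\Psi\colon \Sym^4(V)=H^0(\mP^3,\sO_{\mP^3}(4))\longrightarrow H^1(S(X),T_{S(X)}),\qquad G\mapsto \xi^{S(X)}_G,$$
and first check that $\Psi$ factors through $R_F^4$: if $G\in J_F^4$ then $\xi_G=0$ in $H^1(X,T_X)$ (as recalled in \S\ref{ssec:emb}), so to first order the family $F+\epsilon G=0$ is isomorphic to the trivial one, and by functoriality of the construction $X\mapsto S(X)$ we get $\xi^{S(X)}_G=0$. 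Denote the induced map by $\bar\Psi\colon R_F^4\to H^1(S(X),T_{S(X)})$.

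Next I would argue that $\bar\Psi$ is injective using Theorem \ref{finale} directly. If $0\neq [G]\in R_F^4$ then $\xi_G\neq 0$ in $H^1(X,T_X)$, since $\kappa\sigma\colon R_F^4\hookrightarrow H^1(X,T_X)$ is injective by \S\ref{ssec:emb}; Theorem \ref{finale} then gives that $\xi^{S(X)}_G$ induces a non-trivial $(2,0)$-IVHS on $S(X)$, and in particular $\xi^{S(X)}_G\neq 0$. Hence $\bar\Psi$ is injective.

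A standard Koszul calculation for the Jacobian ideal of a smooth quartic in $\mP^3$ gives $\dim R_F^4=\binom{7}{3}-4\binom{4}{3}=35-16=19$, consistent with the fact that the Kodaira--Spencer map has corank $1$ and $\dim H^1(X,T_X)=20$. Therefore the image of $\bar\Psi$ is a $19$-dimensional subspace of the $20$$-$dimensional space $H^1(S(X),T_{S(X)})$, i.e.\ a hyperplane. Finally, the same application of Theorem \ref{finale} shows that the cup-product $\xi^{S(X)}_G\colon H^0(S(X),\Omega^2_{S(X)})\to H^1(S(X),\Omega^1_{S(X)})$ is non-zero for every $0\neq [G]\in R_F^4$, which is exactly the $i=2$ case of the infinitesimal Torelli claim.

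The main mathematical input is Theorem \ref{finale}; everything else is bookkeeping. The only mildly delicate step is justifying that $\Psi$ factors through the Jacobian quotient via the functoriality of the construction $X\mapsto S(X)$, but this is formal. I do not expect a serious obstacle.
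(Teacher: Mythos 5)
Your proposal is correct and follows essentially the same route as the paper: the paper's (very terse) proof likewise deduces the Torelli claim and the injectivity of $R_F^4\to H^1(S(X),T_{S(X)})$ from Theorem \ref{finale}, and gets the hyperplane statement from the dimension count $19$ versus $h^1(S(X),T_{S(X)})=20$. You merely make explicit the factorization through $R_F^4$ and the computation $\dim R_F^4=19$, which the paper leaves implicit.
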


\begin{proof} The first assertion is clear. The second follows from the fact that the embedded infinitesimal deformations of $X$ form a vector space of dimension 19, whereas $h^1(S(X), T_{S(X)})=20$ (see \cite [\S 7]{CZ3}). \end{proof}

To sum up one has the following:
\begin{thm}\label{nientemale} Let $X\subset\mP^3$ be a general quartic surface and let $S(X)$ be the surface of its bitangent lines. The space $H^1(S(X), T_{S(X)})$ of infinitesimal deformations of $S(X)$ has dimension $20$. The $19$--dimensional space of infinitesimal embedded deformations of $X$ injects into $H^1(S(X), T_{S(X)})$ and any element of its image satisfies the infinitesimal Torelli claim. The general element of  $H^1(S(X), T_{S(X)})$ does not satisfy the infinitesimal Torelli claim. 
\end{thm}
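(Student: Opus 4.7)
My plan is to assemble the theorem as a compilation of results already in place, supplemented by one external citation for the negative statement. Three of the four assertions --- the equality $h^1(S(X), T_{S(X)}) = 20$, the injection of the $19$-dimensional space of embedded infinitesimal deformations of $X$ into $H^1(S(X), T_{S(X)})$, and the infinitesimal Torelli claim for any element in the image of this injection --- are essentially the content of Corollary \ref{perkuranishi}. The dimension equality $h^1(S(X), T_{S(X)}) = 20$ is imported from \cite[\S 7]{CZ3}. The 19-dimensional source is identified with $R_F^4$ via the description of embedded infinitesimal deformations of a smooth hypersurface given in Section \ref{ssec:emb}: for $X$ general one has $\dim R_F^4 = 19$ and $\kappa\sigma$ is injective.

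For the injection $R_F^4 \hookrightarrow H^1(S(X), T_{S(X)})$ via $[G] \mapsto \xi^{S(X)}_G$, the plan is to argue by contraposition: if $\xi^{S(X)}_G = 0$ for some $[G] \in R_F^4$, then the IVHS induced by $\xi^{S(X)}_G$ is necessarily trivial; but Theorem \ref{finale} asserts that for any non-trivial embedded $\xi_G$ the induced IVHS is non-trivial, so $\xi_G = 0$ and hence $[G] = 0$ in $R_F^4$. For the infinitesimal Torelli claim on the image, the plan is to read off Theorem \ref{finale} directly: non-triviality of the $(2,0)$-IVHS is, by definition, non-vanishing of the cup product $\xi^2 \colon H^0(S(X), \Omega^2_{S(X)}) \to H^1(S(X), \Omega^1_{S(X)})$, which is exactly one of the two maps whose non-vanishing verifies the infinitesimal Torelli claim.

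The genuinely new input is the last assertion, that a general element of $H^1(S(X), T_{S(X)})$ fails the infinitesimal Torelli claim. Here the plan is to invoke the detailed Hodge-theoretic analysis carried out in \cite[\S 4]{CZ3}. Heuristically, the image of the embedded deformations fills only a hyperplane, and a deformation direction transverse to this hyperplane should correspond to an abstract deformation of $S(X)$ on which both cup product maps $\xi^1$ and $\xi^2$ degenerate. The main obstacle, were one to reprove it from scratch, is precisely that failing the infinitesimal Torelli claim requires the \emph{simultaneous} vanishing of $\xi^1$ and $\xi^2$, so a naive dimension count does not suffice; one must exhibit the transverse direction explicitly and verify the vanishing of both cup products, which is exactly what the cited reference establishes.
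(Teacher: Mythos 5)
Your proposal is correct and follows essentially the same route as the paper: the dimension count and the injectivity/Torelli statements are read off from Corollary \ref{perkuranishi} and Theorem \ref{finale} (non-trivial $(2,0)$-IVHS forces $\xi^{S(X)}_G\neq 0$ and verifies the Torelli claim), while the failure of infinitesimal Torelli for the general element is imported from \cite{CZ3}, exactly as in the paper's one-line proof. Your extra remark that the last assertion cannot be obtained by a naive dimension count, since both cup products must vanish simultaneously, is a fair observation but not needed here, as that content is entirely delegated to the cited reference.
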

\begin{proof} It follows straightly by Theorem \ref{finale} and by \cite[Main Theorem]{CZ3}.
\end{proof}

\section{A birational map between certain moduli spaces}\label{sec:tor}

In this section we keep the notation introduced above. We will consider the following moduli spaces:
\begin{enumerate}
\item [$\bullet$] the moduli space $\sK$ of (polarised) smooth quartic surfaces in $\mP^3$, that has dimension 19;
\item [$\bullet$]   the irreducible component   $\sH$ of the moduli space of hyperkh\"aler 4--folds containing the (polarised) double EPW--sextic (see for instance \cite {F1,F2}).
\end{enumerate}

 It is known that $\sH$ contains  points corresponding to the Hilbert squares $X^{[2]}$ of all smooth  quartic surfaces $X\subset\mP^3$.  Hence one can consider in a natural way $\sK$ as a subvariety of $\sH$.  
 
 It is also known that $\sH$ has dimension 20 and its general point $V$ has an involution $\iota_V$ that specializes to the \emph{Beauville involution} when $V$ specializes to the Hilbert square $X^{[2]}$ of a smooth quartic surface $X\subset\mP^3$ not containing a line. The Beauville involution $\iota_X$ of $X^{[2]}$ sends a scheme $\eta\in X^{[2]}$ to the scheme $\eta'\in X^{[2]}$ such that the union of the two schemes $\eta$ and $\eta'$ are cut out on $X$ by the line spanned by $\eta$. Note that the fixed points of $\iota_X$ fill up a surface that is isomorphic to the surface $S(X)$ of bitangents to $X$. 

The elements $V\in \sH$ are endowed with a polarization $H_V$, such that $H_V^4=12$  that determines a morphism $\phi_V: V\longrightarrow \Phi_V\subset \mP^5$ that,  in general,   is a 2:1 cover of an EPW--sextic $\Phi_V$. When $V$ specializes to the Hilbert square $X^{[2]}$ of a smooth quartic surface $X\subset\mP^3$ non containing a line, then $\phi_V$ specializes to the map $\phi_X$ that sends a scheme $\eta\in X^{[2]}$ to the line $r_\eta$ spanned by $\eta$, as a point of the Grassmannian $\mathbb G=\mathbb G(1,3)$ that is a smooth quadric in $\mP^5$. The double cover 
$\phi_V: V\longrightarrow \Phi_V$ arises as follows.  For a general $V\in \sH$, given two points $x,y\in V$ one has $\phi_V(x)=\phi_V(y)$ if and only if $\iota_V(x)=y$. Hence the EPW--sextic $\Phi_V$ coincides with the quotient $V/\iota_V$.   The fixed points of the involution $\iota_V$ fill up a  smooth  regular surface $S(V)$ with $p_g=45$ and $K_{S(V)}^2=360$ that specializes to $S(X)$ when $V$ specializes to $ X^{[2]}$. Hence the EPW--sextic  $\Phi_V$ is smooth off a surface of double points isomorphic to  $S(V)$ over which the double cover $V\to \Phi_V$ is ramified (see, for instance, \cite{F1,F2}). When $V$ specializes to the Hilbert square $X^{[2]}$ of a smooth quartic surface $X\subset\mP^3$ non containing a line, then the EPW--sextic $\Phi_V$ specializes to the Grassmannian $\mathbb G$, containing $S(X)$, counted with multiplicity 3. 

 We will consider also the component $\sM$ of the moduli space of smooth regular surfaces with $p_g=45$ and $K^2=360$ containing the surfaces of bitangents to smooth quartics in $\mP^3$ with no  lines, polarised  by the line bundle that maps them as smooth surfaces of degree 40 in $\mathbb G\subset \mP^5$.  Then $\sM$ contains a subvariety $\sS$ whose points correspond to the surfaces of bitangents to smooth quartics in $\mP^3$ with no  lines,  embedded in the Grassmannian $\mathbb G$.

We will need a fact, probably well known to the experts, but for which we did not find any appropriate reference.

\begin{lem}\label{lem:notin} For general $V\in \sH$ the double surface $S(V)$ of the EPW--sextic $\Phi_V$ in $\mP^5$ does not lie on any smooth quadric.
\end{lem}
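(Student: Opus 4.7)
The plan is to argue by contradiction using Proposition \ref{prop:iso} together with a reconstruction statement. Suppose, for contradiction, that for $V$ in a dense open subset $U\subset \sH$ the surface $S(V)$ lies on a smooth quadric $Q_V\subset \mP^5=\mP(H^0(V,H_V)^\vee)$. Since every smooth quadric in $\mP^5$ is projectively equivalent to the Pl\"ucker-embedded Grassmannian $\mathbb G:=\mathbb G(1,3)$, after fixing an isomorphism $Q_V\cong \mathbb G$ we may view $S(V)$ as a smooth surface in $\mathbb G$ of degree $40$ with $p_g=45$ and $K_{S(V)}^2=360$, i.e., as a smooth congruence of lines in $\mP^3$.

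The key step is to reconstruct, from this congruence, a smooth quartic $X\subset\mP^3$ not containing a line such that $S(V)=S(X)$. The natural candidate is the focal surface of the congruence: at each $[\ell]\in S(V)$ the infinitesimal variation of the family singles out a length-$2$ subscheme $Z_\ell\subset \ell$ of focal points, and the union $X=\bigcup_{[\ell]\in S(V)}Z_\ell$ should sweep out a surface of degree $4$ in $\mP^3$ to which every $\ell\in S(V)$ is bitangent by construction. The prescribed numerical invariants of $S(V)$ together with its smoothness should then force $X$ itself to be smooth and line-free. Once $S(V)=S(X)\in \sS$ is established, Proposition \ref{prop:iso} yields $V=X^{[2]}\in \sK$; hence $U\subset \sK$, which is absurd because $\dim U=20>19=\dim \sK$, and this contradiction proves the lemma.

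The main obstacle is the reconstruction step: showing that any smooth surface in $\mathbb G$ with these specific numerical invariants is necessarily a bitangent congruence to a smooth quartic without lines. Equivalently, one must rule out exotic smooth congruences in $\mathbb G$ sharing the invariants of bitangent surfaces while not arising from any quartic. An alternative deformation-theoretic route is to pick a one-parameter family $V_t\in \sH$ with $V_0=X^{[2]}\in \sK$ and $V_t$ generic in $\sH\setminus \sK$, track the putative family $Q_t\subset \mP^5_{V_t}$ of smooth quadrics through $S(V_t)$ starting from $Q_0=\mathbb G$, and show that the induced one-parameter family of congruences $S(V_t)\subset Q_t\cong \mathbb G$ necessarily descends to a family of smooth quartics $X_t\subset \mP^3$, forcing $V_t=X_t^{[2]}\in \sK$ for all small $t$ and contradicting the choice of $V_t$.
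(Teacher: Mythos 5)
Your proposal has two genuine gaps, and the second one makes the strategy circular. The first is the one you flag yourself: the reconstruction step. After identifying the quadric through $S(V)$ with $\mathbb G(1,3)$, you obtain a smooth congruence of lines with the numerical invariants of a bitangent surface, but nothing forces its focal surface to be a smooth quartic with every line of the congruence bitangent to it. The focal scheme on each line does have length $2$, but for an arbitrary congruence with these invariants the focal surface can have the wrong degree, be non-reduced in an uncontrolled way, or fail to be touched doubly by each line; the classical fact quoted in Proposition \ref{prop:iso} (Darboux) is used there only for congruences already known to be of the form $S(X)$, not to characterize them. So ``$S(V)\in\sS$'' is asserted, not proved, and this is precisely the hard content the lemma would need.

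The second gap is logical. Even if you could show $S(V)=S(X)$ for some smooth quartic $X$, Proposition \ref{prop:iso} only says that the map $\sK\dasharrow\sS$ is a bijection onto $\sS$, i.e.\ that $X$ is determined by $S(X)$; it says nothing about which $V\in\sH$ lie over a given point of $\sS$, so you cannot conclude $V=X^{[2]}$ and hence cannot conclude $U\subset\sK$. Injectivity of $\varphi$ on all of $\sH$ is exactly the content of Theorem \ref{thm:bir}, whose proof in the paper \emph{uses} Lemma \ref{lem:notin}; relying on it here would be circular. (At best you would get that $\varphi$ maps $\sH$ into the $19$-dimensional $\sS$, hence has positive-dimensional fibres --- which by itself is not a contradiction.) The paper's actual proof is entirely different and much more elementary: assuming $S(V)$ lies on a smooth quadric $\mathbb Q\subset\mP^5$, it intersects $\mathbb Q$ with the EPW-sextic $\Phi_V$, cuts with a general $\mP^3$, and derives a contradiction by counting singular points ($40$ versus at most $36$, or versus $16$ in the non-reduced cases) of the resulting curve on $\mP^1\times\mP^1$ inside the nodal sextic surface. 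You would need to either carry out an argument of that kind or genuinely prove the characterization of bitangent congruences among smooth surfaces in $\mathbb G$ with these invariants; as written, the proposal does not prove the lemma.
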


\begin{proof} We argue by contradiction. Suppose $S(V)$ sits on a smooth quadric $\mathbb Q$ that intersects the EPW--sextic $\Phi_V$ in a 3-fold $W$ of degree 12, singular along $S(V)$. The 3--fold $W$ may be reducible and even non--reduced. However we denote by $W'$ the union of the components of $W$ that contain $S(V)$. 

Let us  make a section with a general subspace $\Pi$ of dimension 3. Then $\Pi$ intersects the EPW--sextic $\Phi_V$ in a sextic surface $\Phi$ with nodes only at the set $Z$ of the 40 intersection points  of $\Pi$ with $S(V)$, it intersects $\mathbb Q$ in a smooth quadric $Q\cong \mP^1\times \mP^1$, it intersects $W'$ in a curve $C$ on $Q$, that is singular at the set $Z$. 
%By the generality of $\Pi$ and the irreducibility of $S(V)$, there is a monodromy group acting on the points of $Z$ that is the full symmetric group (see \cite [p. 111]{ACGH}). 

Assume first that $C$ is reduced. Then $C$ would be a curve of type $(a,b)$ on $Q$, with $a,b\leq 6$, and therefore the maximum number of singular points that $C$ could have is $ab\leq 36$, a contradiction, because $C$ must be singular along the 40 points of $Z$. 

So $C$ is non--reduced. The 3--fold $W'$ is contained in $\mathbb Q$, and since the Picard group of $\mathbb Q$ is $\mathbb Z$ generated by $\mathcal O_\mathbb Q(1)$, then a non--reduced component of $W'$ can: (i) either be the complete intersection of $\mathbb Q$ with a hyperplane, (ii) or the complete intersection of $\mathbb Q$ with a quadric, (iii) or the complete intersection of $\mathbb Q$ with a cubic. Since all  components of $W'$  contain $S(V)$,  case (i) cannot happen, because $S(V)$ is non--degenerate in $\mathbb P^5$. So we have to discuss cases (ii) and (iii). Suppose we are in case (ii).  Let $W''$ be an irreducible component of $W'$ that is cut out on $\mathbb Q$ by a quadric, and contains $S(V)$. Then $W''$ intersects $\Pi$ along an irreducible curve $C'$ of type $(2,2)$ on $Q$,  that is a non--reduced component of $C$, and contains the 40 points of $Z$ because $W''$ contains $S(V)$.The sextic $\Phi$ is tangent to $Q$ along $C'$. Assume that $\Phi$ is simply tangent to $Q$ along $C'$ (the argument is similar if $\Phi$ is doubly tangent to $Q$ along $C'$). Then we can consider the  linear system $\mathcal L$ of sextic surfaces in $\mP^3$ that are tangent to $Q$ along $C'$. The general surface $S\in \mathcal L$ has a number $d$ of singular points along $C'$, and if a surface $S\in \mathcal L$ has more that $d$ singular points along $C'$, then it must be singular all along $C'$. To compute the number $d$ let us consider the surfaces $S\in \mathcal L$ of the form $Q+T$, where $T$ is a general quartic surface. The number of singular points of such surfaces along $C'$ is clearly $d=4\deg(C')=16$. On the other hand we have in $\mathcal L$ the surface $\Phi$ that has 40 nodes along $C'$, and this gives a contradiction. The argument is completely similar in case (iii) and we can leave it to the reader.
\end{proof}

We have a natural rational map
$$
\varphi: \mathcal H\dasharrow \mathcal M
$$
that takes the class of the general polarised $(V,H_V)\in \mathcal H$ to the class of the polarised surface $(S(V), H_{S(V)})\in \mathcal M$. This map restricts to the map
$$
\varphi': \mathcal K\dasharrow \mathcal S\subseteq \mathcal M
$$
that takes the class of a (polarised) quartic $X\in \mathcal K$ with no line to the class of the surface $S(X)\in \mathcal S$ of bitangents (with the polarization that embeds it  in $\mathbb G$). 

We notice that Theorem \ref {nientemale} implies that the differential of the map $\varphi'$ at the general point of $\mathcal K$ is injective. Actually we can say more:

\begin{prop}\label{prop:iso} The map $\varphi': \mathcal K\dasharrow \mathcal S$ induces a bijection between the open set of $\mathcal K$ parametrizing quartics with no lines and $\mathcal S$. 
\end{prop}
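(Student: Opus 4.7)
Surjectivity of $\varphi'$ onto $\mathcal S$ is immediate from the definition of $\mathcal S$. For injectivity, the idea is to recover $X$ up to projective equivalence directly from the polarised surface $(S(X), H_{S(X)})$, exploiting its canonical embedding in $\mathbb G(1,3) \subset \mP^5$.

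Given $X_1, X_2 \in \mathcal K$, both with no lines, and an isomorphism of polarised surfaces $g: (S(X_1), H_{S(X_1)}) \xrightarrow{\sim} (S(X_2), H_{S(X_2)})$, the very ampleness of each $H_{S(X_i)}$ together with $h^0(H_{S(X_i)}) = 6$ ensures that $g$ lifts uniquely to a linear projective isomorphism $\tilde g: \mP^5 \to \mP^5$ between the two ambient spaces. The main step is to show that $S(X) \subset \mP^5$ lies on a \emph{unique} smooth quadric, namely $\mathbb G$. I expect this to follow by adapting the hyperplane-section-and-singularity-count technique of Lemma \ref{lem:notin}: a hypothetical second smooth quadric through $S(X)$ would cut $\mathbb G$ in a three-fold of degree four singular along $S(X)$, and a generic three-dimensional linear section would produce a curve on a smooth quadric surface $Q \cong \mP^1 \times \mP^1$ singular at the $40$ points of the section of $S(X)$, yielding the same type of degree contradiction as in Lemma \ref{lem:notin}.

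Once uniqueness of the ambient smooth quadric is established, $\tilde g$ must send $\mathbb G_1$ onto $\mathbb G_2$ and so arises from $\Aut(\mathbb G(1,3)) = \PGL(V) \rtimes \mZ/2$. In the $\PGL(V)$-case, $\tilde g$ is induced by a projective transformation $A \in \PGL(V)$ of $\mP^3$ carrying the bitangent lines of $X_1$ onto those of $X_2$; since $X$ is the focal surface of its own congruence of bitangents (a classical invariant of the embedded congruence), $A$ must send $X_1$ to $X_2$, so $[X_1] = [X_2]$ in $\mathcal K$. The duality component of $\Aut(\mathbb G(1,3))$ is excluded because it would identify $S(X_1)$ with the congruence of ``dual bitangents'' of $X_1$, whose focal surface is the dual variety $X_1^\vee$ of degree $4 \cdot 3^2 = 36 \neq 4$, and so cannot agree with $S(X_2)$ for any quartic $X_2$. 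The main obstacle throughout is the uniqueness of the smooth quadric containing $S(X)$: without it, $\tilde g$ might transport the Grassmannian containing $S(X_1)$ to a different smooth quadric through $S(X_2)$, and the rigidification of the Grassmannian structure needed in the rest of the argument would fail.
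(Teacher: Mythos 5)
Your central mechanism --- recovering $X$ from the congruence $S(X)\subset\mathbb G$ as its focal surface, via the classical fact that the focal scheme on a general bitangent is its pair of contact points --- is exactly the paper's argument. The extra scaffolding you build around it (lifting a polarised isomorphism to a projectivity of $\mP^5$ using very ampleness and $h^0(H)=6$, then forcing that projectivity into $\Aut\mathbb G$ and disposing of the duality component) addresses a reduction that the paper's proof leaves entirely implicit: the paper simply restates injectivity as ``no other quartic $X'$ has $S(X')=S(X)$'' as embedded surfaces, without discussing why an abstract isomorphism of polarised surfaces may be assumed to preserve the Grassmannian. So your plan is, if anything, more ambitious and more honest about what needs proving.

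The problem is precisely the step you yourself flag as the main obstacle. The proposed adaptation of Lemma \ref{lem:notin} does not go through: the engine of that lemma is that the EPW--sextic $\Phi_V$ is \emph{double} along $S(V)$, so that $W=\mathbb Q\cap\Phi_V$ is singular along $S(V)$ and the section curve $C$ is \emph{singular} at the $40$ points of $Z$; the contradiction $ab\le 36<40$ counts singular points of a reduced curve of type $(a,b)$, $a,b\le 6$, not points it merely passes through. In your situation $\mathbb G$ and the hypothetical second quadric $\mathbb Q'$ are both smooth, so $\mathbb Q'\cap\mathbb G$ is singular only where their tangent hyperplanes coincide and has no reason to be singular along $S(X)$; a general $3$-plane section then only produces a curve of total degree $4$ on $Q\cong\mP^1\times\mP^1$ \emph{containing} the $40$ points of $Z$, which is no contradiction at all. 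Hence the uniqueness of the smooth quadric through $S(X)$ is not established, and with it falls the identification $\tilde g(\mathbb G_1)=\mathbb G_2$ on which the rest of your argument (including the exclusion of the duality involution) rests. To repair this you would need a genuinely different input --- e.g.\ a computation showing $h^0(\mP^5,\mathcal{I}_{S(X)}(2))=1$, or an argument that the degree-$40$ congruence of bidegree $(12,28)$ cannot lie on the degree-$4$ threefold $\mathbb G\cap\mathbb Q'$ --- or else you must sidestep the issue as the paper does by working directly with embedded congruences.
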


\begin{proof} Let $X$ be a smooth quartic surface in $\mP^3$ not containing any line. Then $S(X)$ is a smooth surface sitting in $\mathbb G\subset \mP^5$, the Grassmannian of  lines in $\mP^3$. We need to show that there is no other quartic $X'$ distinct from $X$ such that $S(X')=S(X)$. To see this, note that $S(X)$ is a congruence of lines, and as such, it has a \emph{focal locus} $Z(X)$ that is described as follows. For the general line $l$ in $S(X)$, one considers the \emph{focal scheme} of the congruence $S(X)$ on $l$, that consists of a 0--dimensional subscheme of $l$ of lenght 2  (for the notion of foci see \cite  {CS,green, Fa2}). Then $Z(X)$ is swept out by these focal schemes as $l$ varies in $S(X)$.  Now it is a classical fact that for the general $l$ in $S(X)$, the focal scheme of the congruence $S(X)$ on $l$ equals the pairs of contact points of the bitangent $l$ with $X$ (see \cite[Livre IV, Chap. I]{Dar}),
hence $Z(X)=X$. In conclusion $X$ can be reconstructed from $S(X)$ as the focal locus of $S(X)$, and the assertion follows.\end{proof}

Proposition \ref {prop:iso} implies that $\dim(\mathcal S)=\dim (\mathcal K)=19$, hence $\dim (\mathcal M)\geq 19$. Recall that if $X$ is a general quartic surface in $\mP^3$, then $H^1(S(X),T_{S(X)})=20$ (see \cite [\S 7]{CZ3}). Hence $\dim (\mathcal M)\leq 20$. 
Building on Proposition \ref  {prop:iso} we can prove that actually $\dim (\mathcal M)=20$ (and therefore it is reduced) and the following holds:

\begin{thm}\label{thm:bir} The map $\varphi: \mathcal H\dasharrow \mathcal M$ is birational.
\end{thm}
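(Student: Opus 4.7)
The plan is to establish birationality of $\varphi$ by separately proving dominance---which forces $\dim\mathcal M=20$ and hence the reducedness of $\mathcal M$---and generic injectivity. Dominance is handled first: since $\varphi(\mathcal K)=\mathcal S$ has dimension $19$ by Proposition \ref{prop:iso} and $\dim\mathcal M\leq h^1(S(X),T_{S(X)})=20$, if $\varphi$ were not dominant its image would have to coincide with $\mathcal S$, forcing every $V\in\mathcal H$ to satisfy $S(V)\subset\mathbb G$; this contradicts Lemma \ref{lem:notin}. Hence $\dim\mathcal M=\dim\mathcal H=20$ and $\mathcal M$ is smooth (so reduced) of dimension $20$ at the general point of $\varphi(\mathcal H)$, with tangent space $H^1(S(X),T_{S(X)})$ at $\varphi(X^{[2]})=[S(X)]$ for a general quartic $X$.

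Next I examine $d\varphi$ at $V_0=X^{[2]}\in\mathcal K$ for a general $X$ with no lines. Both $T_{V_0}\mathcal H$ and $T_{S(X)}\mathcal M=H^1(S(X),T_{S(X)})$ are $20$-dimensional. By Theorem \ref{nientemale} the restriction of $d\varphi_{V_0}$ to the $19$-dimensional subspace $T_{V_0}\mathcal K$ is injective with image the hyperplane of Corollary \ref{perkuranishi}; hence $\ker d\varphi_{V_0}$ has dimension at most $1$ and, if nonzero, is transverse to $T_{V_0}\mathcal K$. A putative nonzero $\xi\in\ker d\varphi_{V_0}$ would integrate, by smoothness of $\mathcal H$, to a one-parameter family $\{V_t\}$ leaving $\mathcal K$ transversally along which $S(V_t)$ stays equal to $S(X)\subset\mathbb G$ to first order. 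Applying the focal-locus reconstruction of Proposition \ref{prop:iso} to each $S(V_t)$ recovers a quartic $X_t$ with $S(X_t)=S(V_t)$, and the rigidity of the triple-Grassmannian EPW-sextic $3\mathbb G$ underlying Hilbert squares forces $V_t=X_t^{[2]}\in\mathcal K$, contradicting the transversality of $\xi$. Hence $d\varphi_{V_0}$ is an isomorphism and $\varphi$ is \'etale at $V_0$.

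Finally, any $V\in\varphi^{-1}(\varphi(V_0))$ satisfies $S(V)=S(X)\subset\mathbb G$; by the same focal-locus-plus-EPW-rigidity argument $V\in\mathcal K$, and then Proposition \ref{prop:iso} gives $V=V_0$. Hence the set-theoretic fiber of $\varphi$ over $\varphi(V_0)$ is the singleton $\{V_0\}$, and combined with the \'etaleness of $\varphi$ at $V_0$ this forces generic degree one, so $\varphi$ is birational. The main technical obstacle is the identification of the \emph{quadric-locus} $\{V\in\mathcal H\colon S(V)\text{ lies on a smooth quadric}\}$ with $\overline{\mathcal K}$: proving that a double EPW-sextic whose singular surface lies on a smooth $4$-quadric $Q\subset\mathbb P^5$ must be the Hilbert square of the quartic recovered as the focal locus of the congruence $S(V)\subset Q\cong\mathbb G$. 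This rigidity---combining the classical congruence-of-lines focal-locus theory with the Beauville-involution structure of $\mathcal H$ described in Section \ref{sec:tor}---is the place where Lemma \ref{lem:notin} and Proposition \ref{prop:iso} are leveraged jointly beyond their stated scopes.
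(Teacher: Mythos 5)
Your first paragraph (dominance) is essentially the paper's argument: Lemma \ref{lem:notin} shows the image of $\varphi$ is strictly larger than $\mathcal S$, forcing $\dim\mathcal M=20$ and $\varphi$ dominant and generically finite. That part is fine.

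The generic-injectivity part, however, has a genuine gap, and you flag it yourself in the final paragraph. Your whole argument pivots on the claim that any $V\in\mathcal H$ with $S(V)=S(X)\subset\mathbb G$ must itself be a Hilbert square, i.e.\ that the ``quadric locus'' in $\mathcal H$ equals $\overline{\mathcal K}$. Nothing in the paper proves this: Lemma \ref{lem:notin} is only a statement about the \emph{general} $V$, and Proposition \ref{prop:iso} only reconstructs a quartic from a surface already known to be a surface of bitangents. The ``rigidity of the triple-Grassmannian EPW-sextic $3\mathbb G$'' you invoke is not established anywhere, so both your computation of $\ker d\varphi_{V_0}$ and your identification of the fiber $\varphi^{-1}(\varphi(V_0))$ rest on an unproven assertion. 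There is a second, independent problem: you compute the fiber at $V_0=X^{[2]}$, whose image $[S(X)]$ lies in the $19$-dimensional locus $\mathcal S\subsetneq\mathcal M$, hence is \emph{not} a general point of the target. For a rational (non-proper) generically finite map, a reduced singleton fiber over a special point plus \'etaleness there does not bound the generic degree: extra sheets over nearby general points may escape the locus where $\varphi$ is defined as you specialize. Also, a nonzero $\xi\in\ker d\varphi_{V_0}$ only gives $S(V_t)=S(X)$ to first order, so you cannot apply the focal-locus reconstruction ``to each $S(V_t)$'' along the integrated family.

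The paper avoids all of this by working at a \emph{general} $V\in\mathcal H$: if the degree were $\geq 2$ there would be $V'\neq V$ with $S(V)=S(V')$ in $\mP^5$, and then the pencil of sextics spanned by the two EPW-sextics $\Phi_V$ and $\Phi_{V'}$ consists of EPW-sextics all singular along $S(V)$, producing a positive-dimensional fiber of $\varphi$ and contradicting generic finiteness. You would need either to adopt that pencil argument or to actually prove the quadric-locus rigidity statement before your route can close.
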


\begin{proof} Lemma \ref {lem:notin} implies right away that the image of $\varphi$ cannot coincide with $\mathcal S$. This implies that $\dim (\mathcal M)=20$, that $\varphi$ is dominant and generically finite. We are left to prove that its degree is 1. 
We argue by contradiction and suppose that the degree is greater than 1. Let $V$ be a general member of $ \mathcal H$. Then there is another $V'\in \mathcal H$ such that $S(V)=S(V')$ as polarised surfaces. Thus, up to a projective transformation of $\mP^5$, we can assume that $S(V)$ coincides with $S(V')$ as surfaces in $\mP^5$. Consider then the pencil of sextic hypersurfaces generated by the two EPW--sextics $\Phi_V$ and $\Phi_{V'}$. All these sextics have $S(V)=S(V')$ as a double surface and are   EPW--sextics. But then the fibre of $\varphi$ passing through $V$ would have positive dimension, a contradiction.  \end{proof}

\end{document}